\documentclass{amsart}
\usepackage{latexsym}
\usepackage{amsmath}
\usepackage{amssymb}
\usepackage{amsfonts}

\numberwithin{equation}{section}
\newcommand{\dist}{\mathop{\mathrm{dist}}}

\newtheorem{theorem}{Theorem}[section]
\newtheorem{lemma}[theorem]{Lemma}
\newtheorem{remark}[theorem]{Remark}
\newtheorem{proposition}[theorem]{Proposition}

\newcommand{\supp}{\mbox{\rm supp\,}}

\begin{document}

\title[Increasing stability of IBVP]{Increasing stability of the inverse boundary value problem for the Schr\"odinger equation}

\author{V Isakov}
\address{Department of Mathematics and Statistics, Wichita State University, KS 67260-0033, USA.}
\email{victor.isakov@wichita.edu}
\thanks{Isakov was in part supported by the NSF grant DMS 10-08902 and by Emylou Keith and Betty Dutcher Distinguished Professorship at WSU}

\author{S Nagayasu}
\address{Department of Mathematical Sciences, University of Hyogo, Himeji 671-2201,  Japan.}
\email{sei@sci.u-hyogo.ac.jp}
\thanks{Nagayasu was partly supported by Grant-in-Aid for Young Scientists (B)}

\author{G Uhlmann}
\address{Department of Mathematics, University of Washington, Box 354305, Seattle, WA 98195-4350, USA \& Fondation de Sciences Math\'ematiques de Paris}
\email{gunther@math.washington.edu}
\thanks{Uhlmann was partly supported by NSF and The Fondation de Sciences Math\'ematiques de Paris}

\author{J-N Wang}
\address{Department of Mathematics, NCTS (Taipei), National Taiwan University, Taipei 106, Taiwan.}
\email{jnwang@math.ntu.edu.tw}
\thanks{Wang was partly supported by the NSC grant 99-2115-M-002-006-MY3.}

\subjclass[2010]{Primary 35R30; Secondary 65N21}


\keywords{Stability estimate, Inverse boundary value problem, Schr\"odinger equation, Complex geometrical optics solutions}

\begin{abstract}
In this work we study the phenomenon of increasing stability in the inverse boundary value problem for the Schr\"odinger equation. This problem was previously considered by Isakov in which he discussed the phenomenon in different ranges of the wave number (or energy). The main contribution of this work is to provide a unified and easier approach to the same problem based on the complex geometrical optics solutions.
\end{abstract}

\maketitle

\section{Introduction}\label{sec1}

Most of inverse problems are known to be severely ill-posed. This weakness makes it extremely difficult to design reliable reconstruction algorithms in practice. However, in some cases, it has been observed numerically that the stability increases with respect to some parameter such as the wave number (or energy) (see, for example, \cite{CHP} for the inverse obstacle scattering problem).  Several rigorous justifications of the increasing stability phenomena in different
settings were obtained by Isakov {\it et al} \cite{HI, I07, I11, ASI07, ASI10}. In particular, in \cite{I11}, Isakov considered the Helmholtz equation with a potential
\begin{equation}\label{eq:1}
\bigl( \Delta + k^{2} + q(x) \bigr) u(x) = 0 \mbox { in }
\Omega \subset \mathbb{R}^{n}
\end{equation}
with $n\ge 3$. He obtained stability estimates of determining $q$ by the
Dirichlet-to-Neumann map for different ranges of $k$, which demonstrate the increasing stability phenomena in $k$. The purpose of this work is to provide a more straightforward way to derive a similar estimate for the inverse boundary value for \eqref{eq:1}. In \cite{I11}, Isakov used \emph{real} geometrical optics solutions for the large wave number $k$. In this work, by more careful choice of an additional large parameter and a priori constraints we are able to use \emph{complex} geometrical optics (CGO) solutions introduced by Calder\'on \cite{ca80} and Sylvester-Uhlmann \cite{su87} for all  $k\geq 1$. This will simplify the proof in \cite{I11}. Recently similar results were obtained by Isaev and Novikov \cite{IN} by using less explicit and more complicated methods of scattering theory.

In this work, instead of considering the Dirichlet-to-Neumann map, we define the boundary measurements to be the Cauchy data corresponding to \eqref{eq:1}
\[
\mathcal{C}_{q} = \left\{
 \left(
  u|_{\partial\Omega},
  \frac{\partial u}{\partial\nu} \biggr|_{\partial\Omega}
 \right) , \  \text{where} \ u \
 \text{is a solution to (\ref{eq:1})}
\right\}.
\]
Hereafter, $\nu$ is the unit outer normal vector of $\partial
\Omega$. Assume that $\mathcal{C}_{q_1}$ and
$\mathcal{C}_{q_2}$ are two
Cauchy data associated with refraction indices $q_1$ and $q_2$,
respectively. To measure the distance between two Cauchy data, we
define
\begin{align*}
\dist ( \mathcal{C}_{q_1}, \mathcal{C}_{q_2} )
& = \max \left\{
 \max_{(f,g) \in \mathcal{C}_{q_{1}}}
 \min_{( \widetilde{f}, \widetilde{g} ) \in \mathcal{C}_{q_{2}}}
 \frac{\lVert (f,g) - ( \widetilde{f}, \widetilde{g} ) \rVert_{
  H^{1/2} \oplus H^{-1/2}
 }}{\lVert (f,g) \rVert_{H^{1/2} \oplus H^{-1/2}}},\right.\\
& \qquad \qquad \ \left.
 \max_{(f,g) \in \mathcal{C}_{q_{2}}}
 \min_{( \widetilde{f}, \widetilde{g} ) \in \mathcal{C}_{q_{1}}}
 \frac{\lVert (f,g) - ( \widetilde{f} , \widetilde{g} ) \rVert
  _{H^{1/2} \oplus H^{-1/2}
 }}{\lVert (f,g) \rVert_{H^{1/2} \oplus H^{-1/2}}}
\right\} ,
\end{align*}
where
\[
\lVert (f,g) \rVert_{H^{1/2} \oplus H^{-1/2}}
=\bigl(
 \lVert f \rVert_{H^{1/2} ( \partial \Omega )}^2
 +\lVert g \rVert_{H^{-1/2} ( \partial \Omega) }^2
\bigr)^{1/2}.
\]
Our main theorem is stated as follows.
\begin{theorem}\label{theorem:main}
Let $n \geq 3$.
Assume $\mathcal{C}_{q_{1}}$ and $\mathcal{C}_{q_{2}}$
are Cauchy data corresponding to $q_{1} (x)$ and $q_{2} (x)$,
respectively.
Let $s > n/2$ and $M > 0$.
Assume $\lVert q_{l} \rVert_{H^{s} ( \Omega )} \leq M$
{\rm (}$l = 1, 2${\rm )} and
$\supp ( q_{1} - q_{2} ) \subset \Omega$.
Denote $\widetilde{q}$ the zero extension of $q_{1} - q_{2}$.
Then for $k \geq 1$ and
$\dist ( \mathcal{C}_{q_{1}} , \mathcal{C}_{q_{2}} ) \leq 1 / e$
we have the following stability estimate:
\begin{equation}\label{eq:main}
\lVert \widetilde{q} \rVert_{H^{-s} ( \mathbb{R}^{n} )}
\leq C k^{4} \dist ( \mathcal{C}_{q_{1}} , \mathcal{C}_{q_{2}} )
+ C \left(
 k + \log \frac{1}{
  \dist ( \mathcal{C}_{q_{1}} , \mathcal{C}_{q_{2}} )
 }
\right)^{- (2s-n)} ,
\end{equation}
where $C > 0$ depends only on $n, s, \Omega , M$
and $\supp ( q_{1} - q_{2} )$.
\end{theorem}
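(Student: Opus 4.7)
The plan is to follow the Sylvester--Uhlmann CGO framework, keeping careful track of the explicit $k$-dependence in order to exhibit the increasing stability.

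First I would construct complex geometrical optics solutions
\[
u_l(x) = e^{x \cdot \zeta_l}\bigl(1 + \psi_l(x)\bigr), \qquad l = 1, 2,
\]
to $(\Delta + k^2 + q_l) u_l = 0$ in $\Omega$. For a prescribed $\xi \in \mathbb{R}^n$ and a free large parameter $\tau$, the complex vectors $\zeta_l \in \mathbb{C}^n$ are chosen so that $\zeta_1 + \zeta_2 = -i\xi$, $\zeta_l \cdot \zeta_l = -k^2$, and $|\zeta_l| \sim \tau$. The hypothesis $n \geq 3$ provides the two real directions mutually orthogonal to $\xi$ needed for this system. The constraint $\zeta_l \cdot \zeta_l = -k^2$ reduces the conjugated equation to $(\Delta + 2\zeta_l \cdot \nabla + q_l)\psi_l = -q_l$, so Faddeev's Green function yields a remainder with $\|\psi_l\|_{L^2(\Omega)} \leq CM/\tau$, with constant independent of $k$ provided $\tau \geq c \max(|\xi|, k)$.

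Next I would apply the Alessandrini-type identity
\[
\int_\Omega (q_2 - q_1)\, u_1 u_2\,dx = \int_{\partial\Omega}\bigl(u_2\,\partial_\nu u_1 - u_1\,\partial_\nu u_2\bigr)\,dS,
\]
which follows from Green's formula. Approximating the Cauchy data of $u_1$ by an element of $\mathcal{C}_{q_2}$ whose corresponding solution makes the boundary integral vanish via Green's identity, and invoking the trace estimate $\|\partial_\nu u_l\|_{H^{-1/2}(\partial\Omega)} \leq C(1+k^2)\|u_l\|_{H^1(\Omega)}$ derived from $\Delta u_l = -(k^2+q_l)u_l$, bounds the right-hand side by $C k^4\,\dist(\mathcal{C}_{q_1},\mathcal{C}_{q_2})\,\|u_1\|_{H^1}\|u_2\|_{H^1}$; this is the source of the $k^4$ prefactor in \eqref{eq:main}. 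On the left-hand side, $u_1 u_2 = e^{-ix\cdot\xi}(1+\psi_1+\psi_2+\psi_1\psi_2)$, so the integral equals $-\widehat{\widetilde{q}}(\xi)$ plus an error bounded by $CM/\tau$. Combined with $\|u_l\|_{H^1(\Omega)} \leq Ce^{C\tau}$ for $\tau \geq 1$, this yields the pointwise bound
\[
\bigl|\widehat{\widetilde{q}}(\xi)\bigr| \leq C k^4\, e^{C\tau}\,\dist(\mathcal{C}_{q_1},\mathcal{C}_{q_2}) + \frac{CM}{\tau},
\]
valid for all $|\xi| \leq c\tau$.

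Finally, I would estimate $\|\widetilde{q}\|_{H^{-s}(\mathbb{R}^n)}^2 = \int(1+|\xi|^2)^{-s}|\widehat{\widetilde{q}}(\xi)|^2\,d\xi$ by splitting at a cut-off $\rho \sim \tau$. For $|\xi| \leq \rho$, the above pointwise bound together with $\int_{\mathbb{R}^n}(1+|\xi|^2)^{-s}\,d\xi < \infty$ (finite because $s>n/2$) gives a contribution of order $(k^4 e^{C\tau}\dist + M/\tau)^2$. For $|\xi| > \rho$, the Sobolev embedding $H^s \hookrightarrow L^\infty$ (also using $s > n/2$) yields $|\widehat{\widetilde{q}}(\xi)| \leq C\|\widetilde{q}\|_{L^1} \leq CM$, and integrating the weight outside the ball of radius $\rho$ produces the factor $\rho^{n-2s}$. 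Balancing by choosing $\tau \sim k + \log(1/\dist)$ --- so that $e^{C\tau}\dist$ stays bounded --- forces the cut-off $\rho \sim k + \log(1/\dist)$, yielding the second term of \eqref{eq:main}.

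The main obstacle is threading the admissible window for $\tau$: it must be large enough for the CGO construction to work on $|\xi| \leq \rho$ (requiring $\tau \gtrsim \max(k, \rho)$), yet small enough that $e^{C\tau}\dist$ does not spoil the first bound. The upper constraint $\tau \leq \frac{1}{2C}\log(1/\dist)$ defines this window, while the simultaneous lower bound $\tau \geq ck$ is precisely the mechanism by which the wave number $k$ enters the effective cut-off $\rho$ and produces the $k$ inside the logarithm of \eqref{eq:main}, giving the increasing stability as $k \to \infty$.
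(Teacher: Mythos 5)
Your skeleton (CGO solutions, Alessandrini's identity, a pointwise Fourier bound, splitting the $H^{-s}$ integral, balancing $\tau\sim k+\log(1/\dist)$) is the same as the paper's, but three quantitative steps as you have set them up would not deliver the stated estimate. The most serious is your use of a single parameter $\tau\sim|\zeta_l|\gtrsim\max(|\xi|,k)$ to govern \emph{both} the exponential growth $e^{C\tau}$ of the CGO solutions \emph{and} the decay $CM/\tau$ of the remainder. Since $\zeta_l\cdot\zeta_l=-k^2$ forces $|\zeta_l|\geq k$, your growth bound is at least $e^{Ck}$, which would put an exponentially growing (in $k$) constant in front of the Lipschitz term -- precisely what the theorem rules out with its $Ck^4$. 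The growth of $|e^{x\cdot\zeta_l}|$ is controlled by the \emph{real} part of $\zeta_l$ alone (the imaginary part of $i\zeta$ in the paper's convention), and the remainder decay is likewise $C/|\mathrm{Re}\,\zeta_l|$, not $C/|\zeta_l|$; these are independent of $k$. The paper exploits exactly this: for frequencies $|\xi|\leq k+R$ it takes that small part equal to a \emph{fixed constant} $a=R$, which is admissible because the reachability constraint is $k^2+a^2>|\xi|^2/4$ -- the wave number itself supplies the largeness. The exponential factor is then $e^{RR_0}=O(1)$ and the Lipschitz constant is polynomial in $k$. Only for $|\xi|>k+R$ does the paper take $a=|\xi|$, and that is the (sole) source of the logarithmic term.

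The other two gaps both concern the exponent $2s-n$. First, bounding the $\psi$-contribution by the absolute error $CM/\tau$ caps your final rate at $(k+\log(1/\dist))^{-1}$, which is weaker than $(k+\log(1/\dist))^{-(2s-n)}$ whenever $2s-n>1$. The paper instead pairs $\widetilde q$ against $\chi(\psi_1+\psi_2+\psi_1\psi_2)$ in the $H^{-s}$--$H^{s}$ duality to get $\frac{C}{a}\lVert\widetilde q\rVert_{H^{-s}}$, which upon squaring becomes $\frac{C}{R^2}\lVert\widetilde q\rVert_{H^{-s}}^2$ and is \emph{absorbed} into the left-hand side by choosing $R$ large. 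Second, your high-frequency tail $\lvert\widehat{\widetilde q}(\xi)\rvert\leq CM$ integrated over $|\xi|>\rho$ contributes $CM^2\rho^{-(2s-n)}$ to the \emph{squared} norm, i.e.\ only $\rho^{-(2s-n)/2}$ after taking square roots -- half the claimed exponent. The paper avoids this by interpolating $\lVert q_1-q_2\rVert_{L^2}^2\leq\varepsilon\lVert\widetilde q\rVert_{H^{-s}}^2+\varepsilon^{-1}\lVert q_1-q_2\rVert_{H^s}^2$ with $\varepsilon\sim T^{2s-n}$, again absorbing the first piece into the left-hand side and keeping $CT^{-2(2s-n)}$. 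The common missing idea in all three places is to express error terms as small multiples of the unknown $\lVert\widetilde q\rVert_{H^{-s}}$ itself (plus terms of the correct order) and hide them on the left, rather than bounding them by absolute constants.
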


From estimate \eqref{eq:main}, it is obvious that the stability behaves more like Lipschitz type when $k$ is large. We would like to point out that unlike in the acoustic case where the constant associated with the Lipschitz estimate grows exponentially in $k$ \cite{NUW}, the constant here grows only polynomially in $k$. Similarly, the corresponding constant obtained in \cite{I11} (see estimate (8) there) also grows polynomially in $k$.

The paper is organized as follows. In Section~2, we will collect some known results about the CGO solutions and an estimate for the difference of potentials, which are essential tools in the proof. In Section~3, we present a detailed proof of Theorem~\ref{theorem:main}.

\section{Preliminaries}

To begin, we state the existence of CGO solutions for \eqref{eq:1}. These special solutions are first constructed by Sylvester and Uhlmann \cite{su87}. Another construction based on the Fourier series is given by H\"ahner \cite{ha}.
\begin{lemma}\label{lemma:CGOsolution}
Let $s > n/2$.
Assume that $\zeta = \eta + i \xi$
{\rm (}$\eta , \xi \in \mathbb{R}^{n}${\rm )} satisfies
\[
\lvert \eta \rvert^{2} = k^{2} + \lvert \xi \rvert^{2} \quad
\mbox{and} \quad
\eta \cdot \xi = 0,
\]
i.e., $\zeta\cdot\zeta=k^2$. Then there exist constants $C_{\ast}$
and $C > 0$,
which are independent of $k$, such that
if $\lvert \xi \rvert > C_{\ast} \lVert q \rVert_{H^{s} ( \Omega )}$
then there exists a solution $u$ to the equation
{\rm (\ref{eq:1})}
of the form
\begin{equation}\label{cgo}
u(x) = e^{i \zeta \cdot x} \bigl( 1 + \psi (x) \bigr) ,
\end{equation}
where $\psi$ has the estimate
\begin{equation*}
\lVert \psi \rVert_{H^{s} ( \Omega )}
\leq \frac{C}{\lvert \xi \rvert}
\lVert q \rVert_{H^{s} ( \Omega )} .
\end{equation*}
\end{lemma}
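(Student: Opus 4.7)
The plan is to reduce the construction of the CGO solution to a fixed-point equation for the remainder $\psi$ and then solve that equation by the Banach contraction principle, using a standard right-inverse for the conjugated Laplacian.

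First I would substitute the ansatz \eqref{cgo} into \eqref{eq:1}. A direct computation gives
\[
e^{-i\zeta\cdot x}(\Delta+k^{2})\bigl(e^{i\zeta\cdot x}v\bigr)
= \bigl(\Delta + 2i\zeta\cdot\nabla + k^{2} - \zeta\cdot\zeta\bigr)v,
\]
so using the hypothesis $\zeta\cdot\zeta=k^{2}$ the $k^{2}$ terms cancel, and with $v=1+\psi$ the equation $(\Delta+k^{2}+q)u=0$ collapses to
\[
(\Delta + 2i\zeta\cdot\nabla)\psi = -q(1+\psi) \quad \text{in } \Omega.
\]

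Next I would invoke the right-inverse $G_{\zeta}$ for the constant-coefficient operator $L_{\zeta}:=\Delta+2i\zeta\cdot\nabla$ constructed by Sylvester-Uhlmann \cite{su87} and H\"ahner \cite{ha}. On a fixed bounded set containing $\Omega$ it satisfies $L_{\zeta}G_{\zeta}f=f$ together with
\[
\lVert G_{\zeta} f \rVert_{H^{s}(\Omega)} \le \frac{C}{\lvert\xi\rvert}\,\lVert f \rVert_{H^{s}(\Omega)},
\]
with $C$ independent of $k$; the $H^{s}$ estimate follows from the standard $L^{2}$-version by commuting derivatives, which is painless because $L_{\zeta}$ has constant coefficients. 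Rewriting the equation as $\psi=T\psi$ with $T\psi:=-G_{\zeta}q-G_{\zeta}(q\psi)$ and using that $H^{s}(\Omega)$ is a Banach algebra for $s>n/2$, I obtain
\[
\lVert T\psi_{1}-T\psi_{2} \rVert_{H^{s}(\Omega)}
\le \frac{CC_{0}}{\lvert\xi\rvert}\,\lVert q \rVert_{H^{s}(\Omega)}\,\lVert \psi_{1}-\psi_{2} \rVert_{H^{s}(\Omega)}.
\]
Setting $C_{\ast}:=2CC_{0}$, whenever $\lvert\xi\rvert>C_{\ast}\lVert q\rVert_{H^{s}(\Omega)}$ the map $T$ is a strict contraction on $H^{s}(\Omega)$, so by Banach there is a unique fixed point $\psi$ with
\[
\lVert \psi \rVert_{H^{s}(\Omega)} \le 2\lVert G_{\zeta}q \rVert_{H^{s}(\Omega)} \le \frac{2C}{\lvert\xi\rvert}\lVert q \rVert_{H^{s}(\Omega)},
\]
which is exactly the claimed bound, and $u=e^{i\zeta\cdot x}(1+\psi)$ then solves \eqref{eq:1}.

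The one delicate point is packaging the $H^{s}\to H^{s}$ estimate for $G_{\zeta}$ so that the constant depends only on $\lvert\xi\rvert$ and the fixed geometry and, crucially, is \emph{independent of} $k$ (and of $\eta$). For Sylvester-Uhlmann's operator the gain is really $\lvert\zeta\rvert^{-1}\le\lvert\xi\rvert^{-1}$, which is $k$-uniform for free; for H\"ahner's Fourier-series construction one must rescale the box, but the same $\lvert\xi\rvert^{-1}$ gain is preserved. Once this uniformity is in place, the contraction argument above is routine, and it is precisely this $k$-independence that allows Lemma~\ref{lemma:CGOsolution} to be applied for every $k\ge 1$ in the subsequent stability proof of Theorem~\ref{theorem:main}.
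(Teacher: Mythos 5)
The paper offers no proof of this lemma---it simply cites Sylvester--Uhlmann \cite{su87} and H\"ahner \cite{ha}---and your reduction to a contraction mapping for $\psi = -G_{\zeta}q - G_{\zeta}(q\psi)$, resting on the $k$-uniform bound $\lVert G_{\zeta} f \rVert_{H^{s}} \leq C \lvert \xi \rvert^{-1} \lVert f \rVert_{H^{s}}$ and the Banach-algebra property of $H^{s}$ for $s>n/2$, is exactly the standard argument those references supply, so your proposal is correct and takes essentially the same route. One caution: for $\zeta\cdot\zeta = k^{2} \neq 0$ the Faddeev-type solution operator gains $\lvert \mathop{\mathrm{Im}}\zeta \rvert^{-1}$ rather than $\lvert \zeta \rvert^{-1}$ (the latter would be $\sim k^{-1}$, which fails when $k$ is large and $\lvert \xi \rvert$ stays bounded---precisely the regime this paper needs), but since you only invoke the weaker $\lvert \xi \rvert^{-1}$ form, the argument is unaffected.
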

\begin{remark}
Note that the correction term $\psi$ decays in  $\text{Im}\,\zeta$. This property is crucial in obtaining that the constant associated with the Lipschiz estimate grows only polynomially in $k$.
\end{remark}

Next inequality is an easy consequence of Alessandrini's identity. We refer to  \cite{fu} for the proof.
\begin{proposition}\label{proposition:identityestimate}
Let $u_{l}$ and $\mathcal{C}_{q_{l}}$ be
solution and Cauchy data to the equation {\rm (\ref{eq:1})}
with $q = q_{l}$, respectively {\rm (}$l = 1, 2${\rm )}.
Then the following estimate holds:
\begin{align*}
& \left\lvert
 \int_{\Omega} ( q_{2} - q_{1} ) u_{1} u_{2} \, d x
\right\rvert \\
& \leq \left\lVert
 \left( u_{1} , \frac{\partial u_{1}}{\partial \nu} \right)
\right\rVert_{H^{1/2} \oplus H^{-1/2}}
\left\lVert
 \left( u_{2} , \frac{\partial u_{2}}{\partial \nu} \right)
\right\rVert_{H^{1/2} \oplus H^{-1/2}}
\dist ( \mathcal{C}_{q_{1}} , \mathcal{C}_{q_{1}} ).
\end{align*}
\end{proposition}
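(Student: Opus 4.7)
The plan is to derive a standard Alessandrini-type identity relating $\int_\Omega (q_2-q_1) u_1 u_2 \,dx$ to a boundary integral, and then absorb the cost of replacing $u_1$ by a nearby solution of the $q_2$-equation into the Cauchy-data distance. First I would note that $\Delta u_l = -(k^2+q_l) u_l$ for $l=1,2$, so the integrand satisfies $u_2 \Delta u_1 - u_1 \Delta u_2 = (q_2-q_1) u_1 u_2$ pointwise in $\Omega$. Green's second identity then gives
\begin{equation*}
\int_\Omega (q_2-q_1) u_1 u_2 \,dx = \int_{\partial\Omega}\left( u_2\,\frac{\partial u_1}{\partial\nu} - u_1\,\frac{\partial u_2}{\partial\nu}\right) dS .
\end{equation*}

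Next, pick any pair $(\widetilde{f},\widetilde{g}) \in \mathcal{C}_{q_2}$, realized as the Cauchy data of some solution $\widetilde{u}_1$ of the Schr\"odinger equation with potential $q_2$. Since both $\widetilde{u}_1$ and $u_2$ satisfy that same equation, applying Green's identity to the pair $(\widetilde{u}_1,u_2)$ makes the corresponding boundary integral vanish:
\begin{equation*}
\int_{\partial\Omega}\left( u_2\,\widetilde{g} - \widetilde{f}\,\frac{\partial u_2}{\partial\nu}\right) dS = 0 .
\end{equation*}
Subtracting this from the previous identity rewrites the volume integral as a boundary integral in the \emph{differences} of Cauchy data:
\begin{equation*}
\int_\Omega (q_2-q_1) u_1 u_2\,dx = \int_{\partial\Omega}\left[ u_2\!\left(\frac{\partial u_1}{\partial\nu} - \widetilde{g}\right) - (u_1 - \widetilde{f})\frac{\partial u_2}{\partial\nu}\right] dS .
\end{equation*}

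I would then bound each of the two resulting terms via the $H^{1/2}(\partial\Omega)$--$H^{-1/2}(\partial\Omega)$ duality pairing and combine them through the elementary Cauchy--Schwarz bound $ab + cd \le \sqrt{a^2+c^2}\sqrt{b^2+d^2}$, which yields
\begin{equation*}
\left|\int_\Omega (q_2-q_1) u_1 u_2\,dx\right| \le \left\|\left(u_1 - \widetilde{f},\,\frac{\partial u_1}{\partial\nu}-\widetilde{g}\right)\right\|_{H^{1/2}\oplus H^{-1/2}} \left\|\left(u_2,\,\frac{\partial u_2}{\partial\nu}\right)\right\|_{H^{1/2}\oplus H^{-1/2}} .
\end{equation*}
Since $(\widetilde{f},\widetilde{g}) \in \mathcal{C}_{q_2}$ was arbitrary, taking the infimum over $\mathcal{C}_{q_2}$ and then multiplying and dividing by $\|(u_1,\partial u_1/\partial\nu)\|_{H^{1/2}\oplus H^{-1/2}}$ produces a ratio bounded, by the very definition of $\dist$, by $\dist(\mathcal{C}_{q_1},\mathcal{C}_{q_2})$. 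This delivers the stated inequality (the duplicated $\mathcal{C}_{q_1}$ printed in the statement being evidently a typo for $\mathcal{C}_{q_2}$).

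The argument presents no substantive obstacle — it is a routine Alessandrini identity. The only mild technical point to watch is that the two boundary terms must regroup, via Cauchy--Schwarz, into the full $H^{1/2}\oplus H^{-1/2}$ norm on both factors, so that the infimum on one side is controlled by the normalized $\max\min$ defining $\dist(\mathcal{C}_{q_1},\mathcal{C}_{q_2})$.
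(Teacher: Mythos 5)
Your proposal is correct and is precisely the standard Alessandrini-identity argument that the paper has in mind: the paper itself omits the proof, referring to Feldman--Salo--Uhlmann \cite{fu}, and the argument there is the same one you give (Green's identity for the pair $(u_1,u_2)$, subtraction of the vanishing boundary pairing coming from a competitor $\widetilde{u}_1$ with the same potential as $u_2$, duality in $H^{1/2}$--$H^{-1/2}$, and the elementary Cauchy--Schwarz regrouping into the product of $H^{1/2}\oplus H^{-1/2}$ norms before taking the infimum over $\mathcal{C}_{q_2}$). You are also right that the $\dist(\mathcal{C}_{q_1},\mathcal{C}_{q_1})$ in the statement is a typo for $\dist(\mathcal{C}_{q_1},\mathcal{C}_{q_2})$.
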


\section{Proof of main theorem}

To prove Theorem~\ref{theorem:main}, we first derive two lemmas.
\begin{lemma}\label{lemma:keyestimate}
Under the assumptions in Theorem~\ref{theorem:main},
\begin{equation}\label{eq:lemma01}
\lvert \mathcal{F} \widetilde{q} ( r \omega ) \rvert
\leq C k^{4} e^{C a}
\dist ( \mathcal{C}_{q_{1}} , \mathcal{C}_{q_{2}} )
+ \frac{C}{a} \lVert \widetilde{q} \rVert_{H^{-s} ( \mathbb{R}^{n} )}
\end{equation}
holds for $k \geq 1$,
$r \geq 0$, $\omega \in \mathbb{R}^{n}$
with $\lvert \omega \rvert = 1$
and $a > C_{\ast} M$ with
$k^{2} + a^{2} > r^{2} / 4$,
where $C > 0$ depends only on
$n, s, M, \Omega$ and $\supp ( q_{1} - q_{2} )$
and $C_{\ast}$ is the constant given in Lemma~\ref{lemma:CGOsolution}.
\end{lemma}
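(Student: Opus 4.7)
The plan is to build two complex geometrical optics (CGO) solutions whose product reproduces the Fourier mode $e^{-ir\omega\cdot x}$, apply the Alessandrini identity of Proposition~\ref{proposition:identityestimate} to transfer boundary information to control of $\mathcal{F}\widetilde{q}(r\omega)$, and then absorb the CGO correction terms by duality. Fix unit vectors $e_{1}, e_{2}\in\R^{n}$ orthogonal to $\omega$ and to each other, and set $\alpha=\sqrt{k^{2}+a^{2}-r^{2}/4}$, which is real by the hypothesis $k^{2}+a^{2}>r^{2}/4$. Define
\[
\zeta_{1}=-\tfrac{r}{2}\omega+\alpha e_{1}+iae_{2},\qquad
\zeta_{2}=-\tfrac{r}{2}\omega-\alpha e_{1}-iae_{2}.
\]
A direct check gives $\zeta_{j}\cdot\zeta_{j}=k^{2}$, $\lvert\mathrm{Im}\,\zeta_{j}\rvert=a$ and $\zeta_{1}+\zeta_{2}=-r\omega$. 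Since $a>C_{\ast}M\geq C_{\ast}\lVert q_{j}\rVert_{H^{s}(\Omega)}$, Lemma~\ref{lemma:CGOsolution} yields CGO solutions $u_{j}=e^{i\zeta_{j}\cdot x}(1+\psi_{j})$ of \eqref{eq:1} with $q=q_{j}$ satisfying $\lVert\psi_{j}\rVert_{H^{s}(\Omega)}\leq CM/a$.

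Multiplying the two solutions gives $u_{1}u_{2}=e^{-ir\omega\cdot x}(1+\psi_{1}+\psi_{2}+\psi_{1}\psi_{2})$, so that
\[
\int_{\Omega}(q_{1}-q_{2})u_{1}u_{2}\,dx=\mathcal{F}\widetilde{q}(r\omega)+R,
\]
where $R=\int\widetilde{q}\,e^{-ir\omega\cdot x}(\psi_{1}+\psi_{2}+\psi_{1}\psi_{2})\,dx$. Proposition~\ref{proposition:identityestimate} bounds the left-hand side by the product of the two boundary norms of $u_{j}$ times $\dist(\mathcal{C}_{q_{1}},\mathcal{C}_{q_{2}})$. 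A trace estimate applied to $u_{j}=e^{i\zeta_{j}\cdot x}(1+\psi_{j})$, combined with $\lvert e^{i\zeta_{j}\cdot x}\rvert\leq e^{Ca}$ on $\Omega$ and $\lvert\zeta_{j}\rvert^{2}=k^{2}+2a^{2}$, shows that each boundary factor is at most $C(1+\lvert\zeta_{j}\rvert)e^{Ca}\leq C(k+a)e^{Ca}$. Using $k\geq 1$ to absorb the $a$-polynomial into the exponential, the product of the two boundary norms is bounded by $Ck^{4}e^{Ca}$, which yields the first term in \eqref{eq:lemma01}.

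To estimate the remainder $R$, fix a cutoff $\chi\in C_{c}^{\infty}(\Omega)$ equal to $1$ on $\supp(q_{1}-q_{2})$. Then each summand of $R$ equals the $H^{-s}\times H^{s}$ duality pairing of $\widetilde{q}$ against $\chi e^{-ir\omega\cdot x}\psi_{j}$ (and similarly for the quadratic piece $\psi_{1}\psi_{2}$). Exploiting that $H^{s}(\R^{n})$ is a Banach algebra for $s>n/2$, together with a Peetre-type shift inequality in Fourier space to absorb the oscillation, the $H^{s}$-norm of each factor reduces to $C\lVert\psi_{j}\rVert_{H^{s}(\Omega)}\leq CM/a$, up to polynomial-in-$r$ losses that are harmless in view of $r\leq 2\sqrt{k^{2}+a^{2}}$ and the exponential $e^{Ca}$ already present in the main term. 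This gives $\lvert R\rvert\leq (C/a)\lVert\widetilde{q}\rVert_{H^{-s}(\R^{n})}$; combining with the previous paragraph produces \eqref{eq:lemma01}. The main technical hurdle is precisely this last step: the oscillating factor $e^{-ir\omega\cdot x}$ nominally introduces a multiplier $(1+r)^{s}$ in the $H^{s}$ norm of the test function, and one must verify carefully—using the compact support of $\chi$, the algebra property of $H^{s}$, and the constraint on $r$—that this loss does not degrade the decisive $1/a$ decay coming from the CGO remainder $\psi_{j}$.
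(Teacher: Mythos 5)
Your setup coincides with the paper's: the phases $\zeta_{1},\zeta_{2}$ with $\zeta_{l}\cdot\zeta_{l}=k^{2}$, $\lvert\mathrm{Im}\,\zeta_{l}\rvert=a$ and $\zeta_{1}+\zeta_{2}=-r\omega$ are exactly those chosen in the proof of Lemma~\ref{lemma:keyestimate}, the use of Proposition~\ref{proposition:identityestimate} is the same, and your bound on the product of the boundary norms is consistent with the paper's (there each factor is estimated by $\lVert\partial u_{l}/\partial\nu\rVert_{H^{-1/2}}\leq Ck^{2}\lVert u_{l}\rVert_{L^{2}}+C\lVert\nabla u_{l}\rVert_{L^{2}}\leq Ck^{2}e^{Ca}$; your $C(k+a)e^{Ca}$ per factor also yields $Ck^{4}e^{Ca}$ for the product since $k\geq1$ and powers of $a$ can be absorbed into $e^{Ca}$).

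The genuine gap is in the step you yourself single out as the main hurdle, and your resolution of it does not work. If you pair $\widetilde q$ against $\chi e^{-ir\omega\cdot x}\psi_{j}$ in the $H^{-s}\times H^{s}$ duality, the Peetre inequality gives $\lVert e^{-ir\omega\cdot x}\chi\psi_{j}\rVert_{H^{s}}\leq C(1+r)^{s}\lVert\chi\psi_{j}\rVert_{H^{s}}$, and this factor is sharp in general (take $\mathcal{F}\widetilde q$ concentrated near $r\omega$). It cannot be "absorbed by the exponential $e^{Ca}$ already present in the main term", because that exponential multiplies the \emph{first} term of \eqref{eq:lemma01}, whereas the loss lands on the second term $\frac{C}{a}\lVert\widetilde q\rVert_{H^{-s}}$, whose constant must be independent of $r$, $k$ and $a$. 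Using $r\leq 2\sqrt{k^{2}+a^{2}}$ you would only obtain $\frac{C(k+a)^{s}}{a}\lVert\widetilde q\rVert_{H^{-s}}$; in the intended application (Lemma~\ref{lemma:keyestimate2} with $a=r\geq k+R$) the required decay $C/r$ would become growth $Cr^{s-1}$, and the absorption argument in the proof of Theorem~\ref{theorem:main} (where $\frac{C}{R^{2}}\lVert\widetilde q\rVert_{H^{-s}}^{2}$ must be moved to the left-hand side) collapses. The paper avoids placing the oscillation inside the $H^{s}$ norm: in \eqref{eq:FqII} it first uses $\lvert e^{-ir\omega\cdot x}\rvert=1$ pointwise and then pairs $\widetilde q$ with the non-oscillating function $\chi(\psi_{1}+\psi_{2}+\psi_{1}\psi_{2})$, whose $H^{s}$ norm is $\leq C/a$ by the algebra property. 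To close your argument you must either justify an estimate of that type for the modulus, or exploit additional structure of the $\psi_{j}$ (e.g.\ frequency localization) to rule out the $(1+r)^{s}$ loss; merely asserting it is harmless is not correct.
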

\begin{proof}

We will use CGO solutions \eqref{cgo} with appropriately chosen parameter $\zeta$.  Let us denote $\zeta_{l} = \eta_{l} + i \xi_{l}$, $l = 1, 2$. We can choose
$\omega^{\bot}, \widetilde{\omega}^{\bot} \in \mathbb{R}^{n}$
satisfying
\[
\omega \cdot \omega^{\bot}
= \omega \cdot \widetilde{\omega}^{\bot}
= \omega^{\bot} \cdot \widetilde{\omega}^{\bot} = 0
\quad \mbox{and} \quad
\lvert \omega^{\bot} \rvert
= \lvert \widetilde{\omega}^{\bot} \rvert = 1.
\]
Now we set
\begin{align*}
& \xi_{1} = a \omega^{\bot} , \quad
\eta_{1} = - \frac{r}{2} \omega
+ \sqrt{k^{2} + a^{2} - \frac{r^{2}}{4}} \,
\widetilde{\omega}^{\bot} , \\
& \xi_{2} = - \xi_{1} \quad \mbox{and} \quad
\eta_{2} = - r \omega - \eta_{1},
\end{align*}
and thus
\[
\xi_{l} \cdot \eta_{l}  = 0, \quad
\lvert \eta_{l} \rvert^{2} = k^{2} + \lvert \xi_{l} \rvert^{2}
\]
and
\begin{math}
\lvert \xi_{l} \rvert
= a \geq C_{\ast} M
\geq C_{\ast} \lVert q_{\ell} \rVert_{H^{s} ( \Omega )}.
\end{math}
From Lemma~\ref{lemma:CGOsolution}, there exist CGO solutions
\[
u_{l} (x) = e^{i \zeta_{l} x} \bigl( 1 + \psi_{l} (x) \bigr)
\]
to equation (\ref{eq:1}) with
$q = q_{l}$, where $\psi_{l}$ satisfies
\[
\lVert \psi_{l} \rVert_{H^{s} ( \Omega )}
\leq \frac{C}{\lvert \xi_{l} \rvert}
\lVert q_{l} \rVert_{H^{s} ( \Omega )} .
\]
Note that $\psi_{l}$ also satisfies the estimate
\begin{equation}\label{eq:estimateofpsi}
\lVert \psi_{l} \rVert_{H^{s} ( \Omega )}
\leq \frac{C}{\lvert \xi_{l} \rvert}
\lVert q_{l} \rVert_{H^{s} ( \Omega )}
\leq \frac{C M}{a} < \frac{C M}{C_{\ast} M} = \frac{C}{C_{\ast}} .
\end{equation}
Now, by Proposition~\ref{proposition:identityestimate} and using the relation $- r \omega = \zeta_{1} + \zeta_{2}$, we have that
\begin{align*}
& \left\lvert
 \int_{\Omega}
  \widetilde{q} (x) e^{- i r \omega \cdot x}
  ( 1 + \psi_{1} ) ( 1 + \psi_{2} ) \,
 d x
\right\rvert
= \left\lvert
 \int_{\Omega} ( q_{2} - q_{1} ) u_{1} u_{2} \, d x
\right\rvert \\
& \leq \left\lVert
 \left( u_{1} , \frac{\partial u_{1}}{\partial \nu} \right)
\right\rVert_{H^{1/2} \oplus H^{-1/2}}
\left\lVert
 \left( u_{2} , \frac{\partial u_{2}}{\partial \nu} \right)
\right\rVert_{H^{1/2} \oplus H^{-1/2}}
\dist ( \mathcal{C}_{q_{1}} , \mathcal{C}_{q_{1}} ).
\end{align*}
Subsequently, we obtain
\begin{align}
\label{eq:Fq01}
& \lvert \mathcal{F} \widetilde{q} ( r \omega ) \rvert
= \left\lvert
 \int_{\Omega} \widetilde{q} (x) e^{- i r \omega \cdot x} \, d x
\right\rvert \\
& \leq \left\lvert
 \int_{\Omega}
  \widetilde{q} (x) e^{- i r \omega \cdot x}
  ( 1 + \psi_{1} ) ( 1 + \psi_{2} ) \,
 d x
\right\rvert \notag \\
& \hspace*{27ex} \mbox{}
+ \left\lvert
 \int_{\Omega}
  \widetilde{q} (x) e^{- i r \omega \cdot x}
  ( \psi_{1} + \psi_{2} + \psi_{1} \psi_{2} ) \,
 d x
\right\rvert \notag \\
& \leq \left\lVert
 \left( u_{1} , \frac{\partial u_{1}}{\partial \nu} \right)
\right\rVert_{H^{1/2} \oplus H^{-1/2}}
\left\lVert
 \left( u_{2} , \frac{\partial u_{2}}{\partial \nu} \right)
\right\rVert_{H^{1/2} \oplus H^{-1/2}}
\dist ( \mathcal{C}_{q_{1}} , \mathcal{C}_{q_{1}} ) \notag \\
& \hspace*{27ex} \mbox{}
+ \left\lvert
 \int_{\Omega}
  \widetilde{q} (x) e^{- i r \omega \cdot x}
  ( \psi_{1} + \psi_{2} + \psi_{1} \psi_{2} ) \,
 d x
\right\rvert . \notag
\end{align}

In view of \eqref{eq:Fq01}, we want to estimate
\begin{math}
\bigl\lVert
 ( u_{l} , \partial u_{l} / \partial \nu )
\bigr\rVert_{H^{1/2} \oplus H^{-1/2}}
\end{math}. Recall that $u_{l}$ solves (\ref{eq:1}) with $q = q_{l}$. Using assumptions
$\lVert q_{l} \rVert_{H^{s} ( \Omega)} \leq M$, and
$s > n/2$, and $k \geq 1$, we have that
\[
\left\lVert
 \frac{\partial u_{l}}{\partial \nu}
\right\rVert_{H^{-1/2} ( \partial \Omega )}
\leq C k^{2} \lVert u_{l} \rVert_{L^{2} ( \Omega )}
+ C \lVert \nabla u_{l} \rVert_{L^{2} ( \Omega )}
\]
and thus
\[
\left\lVert
 \left(
  u_{l} , \frac{\partial u_{l}}{\partial \nu}
 \right)
\right\rVert_{H^{1/2} \oplus H^{-1/2}}
\leq C k^{2} \lVert u_{l} \rVert_{L^{2} ( \Omega )}
+ C \lVert \nabla u_{l} \rVert_{L^{2} ( \Omega )}.
\]
We now choose $R_{0} > 0$ large enough such that
$\Omega \subset B_{R_{0}} (0)$. Then we have
\[
\lvert u_{l} (x) \rvert
\leq e^{- \xi_{l} \cdot x}
\bigl( 1 + \lvert \psi_{l} (x) \rvert \bigr)
\leq C e^{\lvert \xi_{l} \rvert R_{0}} = C e^{a R_{0}}
\]
since
\[
\lvert \psi_{l} (x) \rvert
\leq \lVert \psi_{l} \rVert_{L^{\infty} ( \Omega )}
\leq C \lVert \psi_{l} \rVert_{H^{s} ( \Omega )}
\leq C
\]
by $s > n/2$ and (\ref{eq:estimateofpsi}).  It follows that
\[
\lVert u_{l} \rVert_{L^{2} ( \Omega )} \leq C e^{a R_{0}} .
\]
On the other hand, in view of \begin{math}
\lVert \nabla \psi_{l} \rVert_{L^{2} ( \Omega )}
\leq \lVert \psi_{l} \rVert_{H^{s} ( \Omega )}
\leq C
\end{math}
($s > n/2 \geq 3/2 > 1$) and (\ref{eq:estimateofpsi}), we can estimate
\begin{align*}
\lVert \nabla u_{l} \rVert_{L^{2} ( \Omega )}
& = \left\lVert
 i u_{l} \zeta_{l}
 + e^{i \zeta_{l} \cdot \bullet} \nabla \psi_{l}
\right\rVert_{L^{2} ( \Omega )}
\leq \lvert \zeta_{l} \rvert \lVert u_{l} \rVert_{L^{2} ( \Omega )}
+ e^{\lvert \xi_{l} \rvert R_{0}}
\lVert \nabla \psi_{l} \rVert_{L^{2} ( \Omega )} \\
& \leq C \bigl( k + \lvert \xi_{l} \rvert \bigr) e^{a R_{0}}
+ C e^{\lvert \xi_{l} \rvert R_{0}}
= C ( k + a ) e^{a R_{0}} + C e^{a R_{0}} \leq C k e^{C a}.
\end{align*}
Summing up, we obtain
\begin{align}
\label{eq:ududnu}
\left\lVert
 \left(
  u_{l} , \frac{\partial u_{l}}{\partial \nu}
 \right)
\right\rVert_{H^{1/2} \oplus H^{-1/2}}
& \leq C k^{2} \lVert u_{l} \rVert_{L^{2} ( \Omega )}
+ C \lVert \nabla u_{l} \rVert_{L^{2} ( \Omega )} \\
& \leq C k^{2} e^{C a} + C k e^{C a}
\leq C k^{2} e^{C a}. \notag
\end{align}
Note that here $C$ depends on $n$, $s$, $M$, and the diameter of $\Omega$.

Let  $\chi \in C_{0}^{\infty} ( \Omega )$ be a cut-off function satisfying $\chi \equiv 1$ near $\supp ( q_{1} - q_{2} )$,  then  we have
\begin{align}
\label{eq:FqII}
& \left\lvert
 \int_{\Omega}
  \widetilde{q} (x) e^{- i r \omega \cdot x}
  ( \psi_{1} + \psi_{2} + \psi_{1} \psi_{2} ) \,
 d x
\right\rvert \\
& \hspace*{5ex} = \left\lvert
 \int_{\Omega}
  \widetilde{q} (x) \chi (x) e^{- i r \omega \cdot x}
  ( \psi_{1} + \psi_{2} + \psi_{1} \psi_{2} ) \,
 d x
\right\rvert \notag \\
& \hspace*{5ex} \leq \int_{\Omega}
 \lvert \widetilde{q} (x) \rvert
 \lvert
  \chi ( \psi_{1} + \psi_{2} + \psi_{1} \psi_{2} )
 \rvert \,
d x \notag \\
& \hspace*{5ex}
\leq \lVert \widetilde{q} \rVert_{H^{-s} ( \Omega )}
\lVert
 \chi ( \psi_{1} + \psi_{2} + \psi_{1} \psi_{2} )
\rVert_{H^{s} ( \Omega )} . \notag
\end{align}
Since $s > n/2$ and (\ref{eq:estimateofpsi}), we can estimate
\begin{align}
\label{eq:FqIIchipsi}
& \lVert
 \chi ( \psi_{1} + \psi_{2} + \psi_{1} \psi_{2} )
\rVert_{H^{s} ( \Omega )} \\
& \leq \lVert \chi \rVert_{H^{s} ( \Omega )}
\bigl(
 \lVert \psi_{1} \rVert_{H^{s} ( \Omega )}
 + \lVert \psi_{2} \rVert_{H^{s} ( \Omega )}
 + \lVert \psi_{1} \rVert_{H^{s} ( \Omega )}
 \lVert \psi_{2} \rVert_{H^{s} ( \Omega )}
\bigr) \notag \\
& \leq \lVert \chi \rVert_{H^{s} ( \Omega )}
\left(
 \frac{C M}{a} + \frac{C M}{a}
 + \frac{C}{C_{\ast}} \cdot \frac{C M}{a}
\right)
\leq \frac{C}{a} \notag .
\end{align}
Finally, (\ref{eq:lemma01}) follows from (\ref{eq:Fq01}),
(\ref{eq:ududnu}), (\ref{eq:FqII}), and (\ref{eq:FqIIchipsi}).
\end{proof}
The following lemma is an easy corollary of Lemma~\ref{lemma:keyestimate}.
\begin{lemma}\label{lemma:keyestimate2}
Suppose that the assumptions in Theorem~\ref{theorem:main} hold.
Let $R > C_{\ast} M$ with $C_{\ast}$ being the constant given in Lemma~\ref{lemma:CGOsolution}.
Then for $k \geq 1$, $r \geq 0$ and
$\omega \in \mathbb{R}^{n}$
with $\lvert \omega \rvert = 1$, the following estimates hold true:
if $0 \leq r \leq k + R$ then
\begin{equation}\label{eq:estimateforsmallr}
\lvert \mathcal{F} \widetilde{q} ( r \omega ) \rvert
\leq C k^{4} e^{C R}
\dist ( \mathcal{C}_{q_{1}} , \mathcal{C}_{q_{2}} )
+ \frac{C}{R} \lVert \widetilde{q} \rVert_{H^{-s} ( \mathbb{R}^{n} )} ;
\end{equation}
if $r \geq k + R$ then
\begin{equation}\label{eq:estimateforlarger}
\lvert \mathcal{F} \widetilde{q} ( r \omega ) \rvert
\leq C k^{4} e^{C r}
\dist ( \mathcal{C}_{q_{1}} , \mathcal{C}_{q_{2}} )
+ \frac{C}{r} \lVert \widetilde{q} \rVert_{H^{-s} ( \mathbb{R}^{n} )} .
\end{equation}
\begin{proof}
It is enough to take $a = R$ when $0 \leq r \leq k + R$,
and take $a = r$ when $r \geq k + R$ in Lemma~\ref{lemma:keyestimate}.
\end{proof}
\end{lemma}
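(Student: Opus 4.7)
The plan is to derive both inequalities directly from Lemma~\ref{lemma:keyestimate} by choosing the free parameter $a$ appropriately in each regime. Recall that Lemma~\ref{lemma:keyestimate} bounds $\lvert \mathcal{F}\widetilde{q}(r\omega) \rvert$ by a term of size $k^4 e^{Ca}\dist(\mathcal{C}_{q_1},\mathcal{C}_{q_2})$ plus a term of size $a^{-1}\lVert \widetilde{q}\rVert_{H^{-s}}$, provided $a > C_\ast M$ and $k^2 + a^2 > r^2/4$. Since the first term grows exponentially in $a$ while the second decays as $1/a$, the strategy is to take $a$ as small as the two constraints permit in each frequency range.

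For the low-frequency regime $0 \leq r \leq k + R$, I would choose $a = R$. The hypothesis $R > C_\ast M$ is assumed, so the first constraint is automatic. For the second constraint, $r \leq k+R$ reduces it to $4(k^2 + R^2) > (k+R)^2$, i.e., $3k^2 - 2kR + 3R^2 > 0$; using $2kR \leq k^2 + R^2$ this is bounded below by $2(k^2+R^2) > 0$, so it holds strictly. Substituting $a = R$ into \eqref{eq:lemma01} gives \eqref{eq:estimateforsmallr} immediately.

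For the high-frequency regime $r \geq k + R$, holding $a = R$ would eventually violate $k^2 + a^2 > r^2/4$, so $a$ must be allowed to grow with $r$. The minimal natural choice is $a = r$: then $a \geq k + R > R > C_\ast M$, and $k^2 + r^2 > r^2/4$ is trivial. Substituting yields the $e^{Cr}$ and $1/r$ factors of \eqref{eq:estimateforlarger} directly from \eqref{eq:lemma01}.

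The only genuine obstacle is the verification of the constraint $k^2 + a^2 > r^2/4$ in the first case; aside from this short quadratic inequality, the proof is a pure specialization of Lemma~\ref{lemma:keyestimate} and requires no further analytic work.
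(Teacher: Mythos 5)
Your proposal is correct and is exactly the paper's argument: the paper's proof is the one-line specialization $a=R$ for $0\leq r\leq k+R$ and $a=r$ for $r\geq k+R$ in Lemma~\ref{lemma:keyestimate}. Your explicit verification of the constraints $a>C_{\ast}M$ and $k^{2}+a^{2}>r^{2}/4$ (which the paper leaves implicit) is accurate.
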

Now we prove our main theorem.
\begin{proof}[Proof of Theorem~\ref{theorem:main}]
Written in polar coordinates, we have that
\begin{align}
\label{eq:qHminuss}
\lVert \widetilde{q} \rVert_{H^{-s} ( \mathbb{R}^{n} )}^{2}
& = C \int_{0}^{\infty} \int_{\lvert \omega \rvert = 1}
 \lvert \mathcal{F} \widetilde{q} ( r \omega ) \rvert^{2}
 ( 1 + r^{2} )^{-s} r^{n-1} \,
d \omega d r \\
& = C \biggl(
 \int_{0}^{k + R} \int_{\lvert \omega \rvert = 1}
  \lvert \mathcal{F} \widetilde{q} ( r \omega ) \rvert^{2}
  ( 1 + r^{2} )^{-s} r^{n-1} \,
 d \omega d r \notag \\
 & \hspace*{7ex} \mbox{}
 + \int_{k + R}^{T} \int_{\lvert \omega \rvert = 1}
  \lvert \mathcal{F} \widetilde{q} ( r \omega ) \rvert^{2}
  ( 1 + r^{2} )^{-s} r^{n-1} \,
 d \omega d r \notag \\
 & \hspace*{7ex} \mbox{}
 + \int_{T}^{\infty} \int_{\lvert \omega \rvert = 1}
  \lvert \mathcal{F} \widetilde{q} ( r \omega ) \rvert^{2}
  ( 1 + r^{2} )^{-s} r^{n-1} \,
 d \omega d r
\biggr) \notag \\
& =: C ( I_{1} + I_{2} + I_{3} ), \notag
\end{align}
where $R > C_{\ast} M$ and $T \geq k + R$
are parameters which will be chosen later.

Our task now is to estimate each integral separately.
We begin with $I_{3}$. Since
\begin{math}
\lvert \mathcal{F} \widetilde{q} ( r \omega ) \rvert
\leq C \lVert q_{1} - q_{2} \rVert_{L^{2} ( \Omega )}
\end{math},
$q_{1} - q_{2} \in H_{0}^{s} ( \Omega )$ and $s > n/2$,
we get
\begin{align}
\label{eq:I3}
I_{3}
& \leq C \int_{T}^{\infty}
 \lVert q_{1} - q_{2} \rVert_{L^{2} ( \Omega )}^{2}
 ( 1 + r^{2} )^{-s} r^{n-1} \,
d r
\leq C T^{- m}
\lVert q_{1} - q_{2} \rVert_{L^{2} ( \Omega )}^{2} \\
& \leq C T^{- m}
\left(
 \varepsilon \lVert q_{1} - q_{2} \rVert_{H^{-s} ( \Omega )}^{2}
 + \frac{1}{\varepsilon}
 \lVert q_{1} - q_{2} \rVert_{H^{s} ( \Omega )}^{2}
\right) \notag\\
& \leq C T^{- m}
\left(
 \varepsilon \lVert \widetilde{q} \rVert_{H^{-s} ( \mathbb{R}^{n} )}^{2}
 + \frac{1}{\varepsilon}
\right) \notag
\end{align}
for $\varepsilon > 0$, where $m := 2 s - n$.

On the other hand, by estimate~(\ref{eq:estimateforsmallr}), we can obtain
\begin{align}
\label{eq:I1}
I_{1}
& \leq \int_{0}^{k + R}
 \left(
  C k^{4} e^{C R}
  \dist ( \mathcal{C}_{q_{1}} , \mathcal{C}_{q_{2}} )
  + \frac{C}{R} \lVert \widetilde{q} \rVert_{H^{-s} ( \mathbb{R}^{n} )}
 \right)^{2}
 ( 1 + r^{2} )^{-s} r^{n-1} \,
d r \\
& \leq C \left(
 k^{8} e^{C R}
 \dist ( \mathcal{C}_{q_{1}} , \mathcal{C}_{q_{2}} )^{2}
 + \frac{1}{R^{2}}
 \lVert \widetilde{q} \rVert_{H^{-s} ( \mathbb{R}^{n} )}^{2}
\right)
\int_{0}^{\infty} ( 1 + r^{2} )^{-s} r^{n-1} \, d r \notag \\
& = C \left(
 k^{8} e^{C R}
 \dist ( \mathcal{C}_{q_{1}} , \mathcal{C}_{q_{2}} )^{2}
 + \frac{1}{R^{2}}
 \lVert \widetilde{q} \rVert_{H^{-s} ( \mathbb{R}^{n} )}^{2}
\right) . \notag
\end{align}
In the same way, using estimate~(\ref{eq:estimateforlarger}), we have
\begin{align}
\label{eq:I2}
I_{2}
& \leq C \int_{k+R}^{T}
 \left(
  C k^{4} e^{C r}
  \dist ( \mathcal{C}_{q_{1}} , \mathcal{C}_{q_{2}} )
  + \frac{C}{r} \lVert \widetilde{q} \rVert_{H^{-s} ( \mathbb{R}^{n} )}
 \right)^{2}
 ( 1 + r^{2} )^{-s} r^{n-1} \,
d r \\
& \leq C k^{8}
\dist ( \mathcal{C}_{q_{1}} , \mathcal{C}_{q_{2}} )^{2}
\int_{k+R}^{T} e^{C r}  ( 1 + r^{2} )^{-s} r^{n-1} \, d r \notag \\
& \hspace*{20ex} \mbox{}
+ C \lVert \widetilde{q} \rVert_{H^{-s} ( \mathbb{R}^{n} )}^{2}
\int_{k+R}^{T}  ( 1 + r^{2} )^{-s} r^{n-1} \, d r \notag \\
& \leq C \left(
 k^{8} e^{C T}
 \dist ( \mathcal{C}_{q_{1}} , \mathcal{C}_{q_{2}} )^{2}
 + \frac{1}{R^{2}}
 \lVert \widetilde{q} \rVert_{H^{-s} ( \mathbb{R}^{n} )}^{2}
\right) , \notag
\end{align}
where we have used
\begin{align*}
& \int_{k+R}^{T} e^{C r} ( 1 + r^{2} )^{-s} r^{n-1} \, d r
\leq e^{C T} \int_{k+R}^{T} ( 1 + r^{2} )^{-s} r^{n-1} \, d r \\
& \hspace*{15ex}
\leq e^{C t} \int_{0}^{\infty} ( 1 + r^{2} )^{-s} r^{n-1} \, d r
= C e^{C T},
\end{align*}
\begin{align*}
\int_{k+R}^{T}  ( 1 + r^{2} )^{-s} r^{n-1} \, d r
& \leq \int_{k+R}^{T} r^{- 2 s + n-1} \, d r \\
& \leq \frac{1}{2 s - n + 2}
\frac{1}{( k + R )^{2 s - n + 2}}
\leq \frac{C}{( k + R )^{2}}
\leq \frac{C}{R^{2}},
\end{align*}
and $s > n / 2$, $k \geq 1$.
Combining (\ref{eq:qHminuss})--(\ref{eq:I2}) gives
\begin{align}
\label{eq:tildeqRepsilon}
\lVert \widetilde{q} \rVert_{H^{-s} ( \mathbb{R}^{n} )}^{2}
& \leq C ( I_{1} + I_{2} + I_{3} ) \\
& \leq C \left(
 k^{8} e^{C R}
 \dist ( \mathcal{C}_{q_{1}} , \mathcal{C}_{q_{2}} )^{2}
 + \frac{1}{R^{2}}
 \lVert \widetilde{q} \rVert_{H^{-s} ( \mathbb{R}^{n} )}^{2}
\right) \notag \\
& \hspace*{4ex} \mbox{}
+ C \left(
 k^{8} e^{C T}
 \dist ( \mathcal{C}_{q_{1}} , \mathcal{C}_{q_{2}} )^{2}
 + \frac{1}{R^{2}}
 \lVert \widetilde{q} \rVert_{H^{-s} ( \mathbb{R}^{n} )}^{2}
\right) \notag \\
& \hspace*{4ex} \mbox{}
+ C T^{- m}
\left(
 \varepsilon \lVert \widetilde{q} \rVert_{H^{-s} ( \mathbb{R}^{n} )}^{2}
 + \frac{1}{\varepsilon}
\right) \notag \\
& \leq C \left( \frac{2}{R^{2}} + \varepsilon T^{-m} \right)
\lVert \widetilde{q} \rVert_{H^{-s} ( \mathbb{R}^{n} )}^{2}
+ C k^{8} e^{C R}
\dist ( \mathcal{C}_{q_{1}} , \mathcal{C}_{q_{2}} )^{2} \notag \\
& \hspace*{5ex} \mbox{} + C k^{8} e^{C T}
\dist ( \mathcal{C}_{q_{1}} , \mathcal{C}_{q_{2}} )^{2}
+ \frac{C T^{-m}}{\varepsilon} . \notag
\end{align}

To continue, we consider the following two cases:
\[
\mbox{(i) } k + R \leq p \log \frac{1}{A} \quad
\mbox{and} \quad
\mbox{(ii) } k + R \geq p \log \frac{1}{A} ,
\]
where $R > C_{\ast} M$ and
$p > 0$ are constants which will be determined later.  We begin with the first case (i).
Taking
\begin{equation}\label{eq:iR}
R > 2 \sqrt{C}
\end{equation}
and $\varepsilon = c T^{m}$
($c \ll 1$), we deduce that
\begin{equation}\label{eq:tildeqPhiT}
\lVert \widetilde{q} \rVert_{H^{-s} ( \mathbb{R}^{n} )}^{2}
\leq C k^{8} A
+ C k^{8} e^{C T} A
+ C T^{- 2 m}
\end{equation}
for any $T \geq k + R$ by (\ref{eq:tildeqRepsilon}),
where
$A = \dist ( \mathcal{C}_{q_{1}} , \mathcal{C}_{q_{2}} )^{2}$.

Now we choose $T = p \log (1/A)$,
which is greater than or equal to $k + R$ by the condition (i).
Our current aim is to show that there exists
$C_{1} > 0$ such that
\begin{equation}\label{eq:iaim1}
k^{8} e^{C T} A
\leq C_{1}
\left( k + \log \frac{1}{A} \right)^{- 2 m}
\end{equation}
and
\begin{equation}\label{eq:iaim2}
T^{- 2 m}
\leq C_{1}
\left( k + \log \frac{1}{A} \right)^{- 2 m} .
\end{equation}
Substituting (\ref{eq:iaim1}) and (\ref{eq:iaim2})
into (\ref{eq:tildeqPhiT})
clearly implies (\ref{eq:main}).
We remark that  (\ref{eq:iaim2}) is equivalent to
\begin{equation}\label{eq:iaim2a}
C_{1}^{- 1 / 2 m} \left( k + \log \frac{1}{A} \right)
\leq p \log \frac{1}{A} .
\end{equation}
Since we have
\[
k + \log \frac{1}{A} \leq (k+R) + \log \frac{1}{A}
\leq ( p + 1 ) \log \frac{1}{A}
\]
by (i), condition (\ref{eq:iaim2a})
(i.e.\ (\ref{eq:iaim2})) holds whenever
\begin{equation}\label{eq:C1pcond1}
C_{1}^{- 1 / 2 m} \leq \frac{p}{p + 1} .
\end{equation}
On the other hand, condition (\ref{eq:iaim1})
is equivalent to
\begin{equation}\label{eq:iaim1a}
8 \log k + ( C p - 1 ) \log \frac{1}{A}
+ 2 m \log \left( k + \log \frac{1}{A} \right)
\leq \log C_{1} .
\end{equation}
Using (i), we can bound the left-hand side of (\ref{eq:iaim1a}) by
\[
( \mbox{LHS of (\ref{eq:iaim1a})} )
\leq 8 \log p + 2 m \log (p+1)
+ ( C p - 1 ) \log \frac{1}{A} + 2 ( m + 4 ) \log \log \frac{1}{A}.
\]
Choosing
\begin{equation}\label{eq:C1pcond2}
p \leq \frac{1}{2 C},
\end{equation}
we can see that
\begin{align*}
& ( \mbox{LHS of (\ref{eq:iaim1a})} ) \\
& \leq 8 \log \frac{1}{2 C}
+ 2 m \log \left( \frac{1}{2 C} + 1 \right)
- \frac{1}{2} \log \frac{1}{A}
+ 2 ( m + 4 ) \log \log \frac{1}{A} \\
& \leq 8 \log \frac{1}{2 C}
+ 2 m \log \left( \frac{1}{2 C} + 1 \right)
+ \max_{z \geq 2}
\left( - \frac{1}{2} z + 2 ( m + 4 ) \log z \right) \\
& = 8 \log \frac{1}{2 C}
+ 2 m \log \left( \frac{1}{2 C} + 1 \right)
+ 2 ( m + 4 ) \bigl( \log ( 4 m + 16 ) - 1 \bigr) .
\end{align*}
Therefore, condition (\ref{eq:iaim1a})
(i.e.\ (\ref{eq:iaim1})) is satisfied provided
\begin{equation}\label{eq:C1pcond3}
8 \log \frac{1}{2 C}
+ 2 m \log \left( \frac{1}{2 C} + 1 \right)
+ 2 ( m + 4 ) \bigl( \log ( 4 m + 16 ) - 1 \bigr)
\leq \log C_{1} .
\end{equation}

Next we consider case (ii).
We choose $T = k + R$ and observe that the term $I_{2}$ in (\ref{eq:qHminuss}) does not
appear in this case. Hence, instead of
(\ref{eq:tildeqRepsilon}), we have
\begin{align*}
& \lVert \widetilde{q} \rVert_{H^{-s} ( \mathbb{R}^{n} )}^{2} \\
& \leq C \left( \frac{1}{R^{2}} + \varepsilon T^{-m} \right)
\lVert \widetilde{q} \rVert_{H^{-s} ( \mathbb{R}^{n} )}^{2}
+ C k^{8} e^{C R}
\dist ( \mathcal{C}_{q_{1}} , \mathcal{C}_{q_{2}} )^{2}
+ \frac{C T^{-m}}{\varepsilon}
\end{align*}
Setting $\varepsilon = T^{m} / R^{2}$ implies that
\[
\lVert \widetilde{q} \rVert_{H^{-s} ( \mathbb{R}^{n} )}^{2}
\leq \frac{2 C}{R^{2}}
\lVert \widetilde{q} \rVert_{H^{-s} ( \mathbb{R}^{n} )}^{2}
+ C k^{8} e^{C R} A
+ C R^{2} ( k + R )^{- 2 m} .
\]
Now we choose
\begin{equation}\label{eq:iiR}
R > 2 \sqrt{C}
\end{equation}
and obtain that
\[
\lVert \widetilde{q} \rVert_{H^{-s} ( \mathbb{R}^{n} )}^{2}
\leq C k^{8} A
+ C ( k + R )^{- 2 m} ,
\]
which implies the desired estimate (\ref{eq:main})
since from condition (ii) we have
\[
k + R \geq \frac{k}{2} + \frac{k + R}{2}
\geq \frac{k}{2} + \frac{p}{2} \log \frac{1}{A}
\geq \frac{\min \{ p , 1 \}}{2}
\left( k + \log \frac{1}{A} \right) .
\]

As the last step, we choose appropriate $R, p$, and $C_{1}$ to complete the proof.
We first pick $R > C_{\ast} M$ sufficiently large satisfying
(\ref{eq:iR}) and (\ref{eq:iiR}) and then choose $p$ small enough satisfying (\ref{eq:C1pcond2}). Finally, we
take $C_{1}$ large enough satisfying
(\ref{eq:C1pcond1}) and (\ref{eq:C1pcond3}).
\end{proof}

\section{Conclusion}

We think that increasing stability is an important feature of the inverse boundary problem for the Schr\"odinger potential which should lead to higher resolution of numerical algorithms. It is important to collect numerical evidence of this phenomenon. Our method is based on the CGO solutions constructed in \cite{ha} where the constants in Lemma 2.1 are explicit. So most likely one can give explicit constants in
Theorem 1.1 at least for particular domains $\Omega$ like balls. Contrary to the acoustic case \cite{NUW}, the constants in the estimate \eqref{eq:main} depend only polynomially on $k$. It is an important and challenging question to determine whether the exponential dependence on $k$ of the estimates in \cite{NUW} is indeed generic if there are no assumptions on rays.

\end{document}